\documentclass[a4paper,12pt]{amsart}
\usepackage{amsmath,amsfonts,amssymb}
\usepackage{amsthm}
\usepackage{amstext,amscd,mathabx,mathdots}
\usepackage{graphics,graphicx,epsf}
\usepackage[usenames]{color}
\usepackage{hyperref}
\usepackage{tikz}
\usepackage{xcolor}
\usepackage{color}
\usepackage{float}
\usetikzlibrary{arrows}
\usepackage{lineno}

\setlength{\topmargin}{+0.1in} \setlength{\textwidth}{6.5in}
\setlength{\textheight}{8.5in} \setlength{\oddsidemargin}{-0.2cm}
\setlength{\evensidemargin}{-0.2cm}

\theoremstyle{emsthmsl}

\newtheorem{theorem}{Theorem}[section]
\newtheorem{definition}[theorem]{Definition}
\newtheorem{proposition}[theorem]{Proposition}
\newtheorem{lemma}[theorem]{Lemma}
\newtheorem{corollary}[theorem]{Corollary}
\newtheorem{remark}[theorem]{Remark}

\linespread{1.05}

\numberwithin{equation}{section}

\title[Euler-Rodrigues formula]{Euler-Rodrigues formula for three-dimensional rotation via fractional powers of matrices} 

\author[F. D. M. Bezerra]{Flank D. M. Bezerra}
\address[F. D. M. Bezerra]{Departamento de Matem\'atica, Universidade Federal da Para\'iba, 58051-900 Jo\~ao Pessoa PB, Brazil.}
\email{flank@mat.ufpb.br}

\author[L. A. Santos]{Lucas A. Santos}
\address[L. A. Santos]{Instituto Federal da Para\'iba, 58780-000 Itaporanga PB, Brazil.}
\email{lucas92mat@gmail.com}

\date{\today}

\begin{document}

\maketitle

\begin{abstract}
In this short paper, we review the Euler-Rodrigues formula for three-dimensional rotation via fractional powers of matrices. We derive the rotations by any angle through the spectral behavior of the fractional powers of the rotation matrix by $\frac{\pi}{2}$ in $\mathbb{R}^3$ about some axis.

\vskip .1 in \noindent {\it Mathematics Subject Classification 2010}: 37E45, 15A04, 47D03.
\newline {\it Key words and phrases:} Euler-Rodrigues formula; three-dimensional rotations; fractional powers; approximations.

\end{abstract}

\tableofcontents

\section{Introduction}

The Euler-Rodrigues formula describes the rotation of a vector in three dimensions, it was first discovered by Euler \cite{E} and later rediscovered independently by Rodrigues \cite{R} and it is related to a number of interesting problems in computer graphics, dynamics, kinematics, mathematics, and robotics, see Cheng and  Gupta \cite{CG} and references therein.

Reviews of the Euler–Rodrigues formula in different mathematical forms can be found in the literature, see e.g., Dai \cite{D}, Kahvec\'i, Yayli and G\"ok \cite{KYG} and Mebius \cite{M}. Here, we explored the geometric aspect of the classical Balakrishnan formula in \cite{B} to obtain a new algorithm for the generation of a three-dimensional rotation matrix. 

To our best knowledge, this treatment on the Euler–Rodrigues formula has not yet been explored in the literature.

\section{Three-dimensional rotations}

Firstly, we present some facts of the theory of fractional powers of matrices. Secondly, we establish the main results of this paper; namely, we review the Euler-Rodrigues formula via the Balakrishnan formula on fractional powers of matrices.

\subsection{Fractional powers of operators}

In this subsection, we recall some definitions and summarize without proofs results of the theory of fractional powers of matrices, in the sense of Balakrishnan \cite{B}.

\begin{definition}
For $A\in\mathbb{C}^{n\times n}$ with no eigenvalues on $(-\infty,0)$ and $\alpha\in\mathbb{R}$, $A^\alpha=e^{\alpha\log A}$, where $\log A$ is the principal logarithm.
\end{definition}

Thanks to Balakrishnan \cite{B} we following results are well-known. 

\begin{proposition}
Let $0<\alpha<1$. We have
\begin{itemize}
\item[$(i)$] 
\begin{equation}\label{fracpos}
A^{\alpha}=\frac{\sin{\alpha\pi}}{\pi}\int_{0}^{\infty}{\lambda}^{\alpha-1}A(\lambda I+A)^{-1}d\lambda;
\end{equation}
\item[$(ii)$] Let $\beta$ be real number, then
\[
(A^{\alpha})^{\beta}=A^{\alpha\beta}.
\]
\end{itemize}
\end{proposition}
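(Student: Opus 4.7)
The plan for (i) is to reduce the matrix identity to the well-known scalar formula via the holomorphic functional calculus. First I would establish that, for every $\mu\in\mathbb{C}\setminus(-\infty,0]$ and $0<\alpha<1$,
$$\mu^{\alpha}=\frac{\sin\alpha\pi}{\pi}\int_0^\infty \lambda^{\alpha-1}\,\frac{\mu}{\lambda+\mu}\,d\lambda,$$
by combining the classical beta-function identity $\int_0^\infty t^{\alpha-1}(1+t)^{-1}dt=\pi/\sin\alpha\pi$ with the substitution $t=\lambda/\mu$; the hypothesis on $\mu$ ensures that the deformed contour stays on the principal branch. Once this scalar formula is available, I would pass to matrices by applying both sides to the Jordan canonical form of $A$: because $z\mapsto z^\alpha$ and $z\mapsto z/(\lambda+z)$ are holomorphic on a neighbourhood of $\sigma(A)$, the equality for eigenvalues (together with its derivatives, to handle nontrivial Jordan blocks) promotes to an identity between the operators through the holomorphic functional calculus. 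An equivalent route is to write $A^\alpha=(2\pi i)^{-1}\oint_{\Gamma}z^\alpha(zI-A)^{-1}dz$ and deform $\Gamma$ onto the branch cut $(-\infty,0]$, where the jump $e^{2\pi i\alpha}-1$ of $z^\alpha$ across the cut produces exactly the prefactor $\sin\alpha\pi/\pi$.

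The main technical point is to justify the interchange of integrals implicit in either route. This is controlled by the two resolvent estimates $\|A(\lambda I+A)^{-1}\|=O(1)$ as $\lambda\to 0^+$ (since $0\notin\sigma(A)$) and $\|A(\lambda I+A)^{-1}\|=O(\lambda^{-1})$ as $\lambda\to+\infty$, which combine with the weight $\lambda^{\alpha-1}$ to make the integrand dominated by $\lambda^{\alpha-1}$ near zero and by $\lambda^{\alpha-2}$ at infinity, hence absolutely integrable for $0<\alpha<1$. After absolute convergence is in place, Fubini legitimises the exchange and the eigenvalue-wise check closes the proof. I expect this convergence/interchange step to be the only genuine obstacle; everything else is a matter of recalling the appropriate Eulerian integral.

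For (ii) I would work directly from the definition $A^\alpha=e^{\alpha\log A}$. The crux is the branch-preserving identity $\log(A^\alpha)=\alpha\log A$. Indeed, every eigenvalue $\lambda$ of $A$ satisfies $\arg\lambda\in(-\pi,\pi)$ by hypothesis, and since $0<\alpha<1$ one has $\alpha\arg\lambda\in(-\alpha\pi,\alpha\pi)\subset(-\pi,\pi)$, so the principal logarithm obeys $\log\lambda^{\alpha}=\alpha\log\lambda$. By the spectral mapping theorem $\sigma(A^\alpha)\subset\mathbb{C}\setminus(-\infty,0]$, so $A^\alpha$ itself lies in the domain of the fractional-power operation, and the scalar identity lifts, via Jordan form, to the operator identity $\log A^\alpha=\alpha\log A$. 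Exponentiating,
$$(A^\alpha)^{\beta}=e^{\beta\log A^{\alpha}}=e^{(\alpha\beta)\log A}=A^{\alpha\beta},$$
which is the claim. The only subtlety here is the branch check above; the remainder is a one-line manipulation of the matrix exponential.
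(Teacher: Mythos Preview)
The paper does not prove this proposition. It is introduced with the sentence ``Thanks to Balakrishnan \cite{B} we following results are well-known'' and is simply quoted from the literature without argument; the subsequent sections use it as a black box in the proof of Theorem~\ref{teofrac}. So there is no proof in the paper to compare your proposal against.

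That said, your outline is a correct and standard way to establish the result. For part (i), reducing to the scalar Euler--beta integral $\int_0^\infty t^{\alpha-1}(1+t)^{-1}\,dt=\pi/\sin\alpha\pi$ and lifting via the holomorphic functional calculus (or, equivalently, collapsing a Dunford contour onto the cut) is exactly the route taken in Balakrishnan's original paper and in standard treatments; your integrability check $\lambda^{\alpha-1}$ near $0$ and $\lambda^{\alpha-2}$ at infinity is the right justification for Fubini. For part (ii), the key observation that $0<\alpha<1$ forces $\arg(\lambda^\alpha)\in(-\alpha\pi,\alpha\pi)\subset(-\pi,\pi)$, so that the principal logarithm satisfies $\log(A^\alpha)=\alpha\log A$, is precisely the point and your argument is complete. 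One small remark: in (i) your substitution $t=\lambda/\mu$ for complex $\mu$ is really a contour rotation, and you should say a word about why the arcs at $0$ and $\infty$ contribute nothing (they do not, by the same growth bounds you already identified), but this is routine.
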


\subsection{Main results}

In this subsection, we present the main results of this paper. We explored the geometric aspect of the classical Balakrishnan formula \eqref{fracpos} (see, e.g., Balakrishnan \cite{B}) to obtain a new algorithm for the generation of three-dimensional rotation matrices. Here, the matrix representations of linear operators on $\mathbb{R}^3$ are considered using the standard basis of $\mathbb{R}^3$, and $\mathbf{\hat{n}}=(n_1,n_2,n_3)$ denotes a vector in $\mathbb{R}^3$ with $n_1^2+n_2^2+n_3^2=1$.

\begin{lemma}\label{lemmapi2}
The matrix which represents the rotation by an angle $\frac{\pi}{2}$ about the axis $\mathbf{\hat{n}}=(n_1,n_2,n_3)$ is given by
\begin{equation}\label{e-Rot}
A\Big(\mathbf{\hat{n}},\frac{\pi}{2}\Big)=\left[\begin{matrix}
n_1^2 & n_1n_2-n_3 & n_1n_3+n_2 \\ n_1n_2+n_3 & n_2^2 & n_2n_3-n_1 \\ n_1n_3-n_2 & n_2n_3+n_1 & n_3^2
\end{matrix}\right].
\end{equation}
\end{lemma}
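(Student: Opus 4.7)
The plan is to construct the rotation explicitly from two summands with transparent geometric meaning. Let $P=\hat{n}\hat{n}^{T}$ be the orthogonal projection matrix onto the line $\mathrm{span}(\hat{n})$, and let
\[
N=\begin{pmatrix} 0 & -n_{3} & n_{2} \\ n_{3} & 0 & -n_{1} \\ -n_{2} & n_{1} & 0 \end{pmatrix}
\]
be the skew-symmetric matrix representing the linear map $v\mapsto \hat{n}\times v$. I will show that $A(\hat{n},\pi/2)=P+N$, after which \eqref{e-Rot} follows immediately from a termwise sum of these two matrices.

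To justify this identity, decompose any $v\in\mathbb{R}^{3}$ as $v=v_{\parallel}+v_{\perp}$, where $v_{\parallel}\in\mathrm{span}(\hat{n})$ and $v_{\perp}\perp \hat{n}$. A rotation by $\pi/2$ about $\hat{n}$ must leave $v_{\parallel}$ fixed and rotate $v_{\perp}$ by $\pi/2$ inside the plane $\hat{n}^{\perp}$. Using $n_{1}^{2}+n_{2}^{2}+n_{3}^{2}=1$ one sees that $Pv=v_{\parallel}$ and $Pv_{\perp}=0$, while $N\hat{n}=0$ and $Nv_{\perp}=\hat{n}\times v_{\perp}$. The vector $\hat{n}\times v_{\perp}$ lies in $\hat{n}^{\perp}$, is orthogonal to $v_{\perp}$, and has length $|v_{\perp}|$; by the right-hand rule its orientation corresponds to rotating $v_{\perp}$ by $+\pi/2$ around $\hat{n}$. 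Consequently $(P+N)v$ equals $v_{\parallel}$ plus the $\pi/2$-rotation of $v_{\perp}$, as required.

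The remainder is a computation: writing out
\[
P=\begin{pmatrix} n_{1}^{2} & n_{1}n_{2} & n_{1}n_{3} \\ n_{1}n_{2} & n_{2}^{2} & n_{2}n_{3} \\ n_{1}n_{3} & n_{2}n_{3} & n_{3}^{2} \end{pmatrix}
\]
and adding $N$ reproduces the right-hand side of \eqref{e-Rot} entry by entry. The subtlest step is the orientation check, since an inverted sign in the cross product would produce rotation by $-\pi/2$ instead of $+\pi/2$. I would anchor the convention by specializing to $\hat{n}=(0,0,1)$, for which \eqref{e-Rot} reduces to the standard counter-clockwise rotation by $\pi/2$ about the $z$-axis, confirming that the formula encodes the positively oriented rotation.
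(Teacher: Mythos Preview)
Your proof is correct and follows essentially the same route as the paper's: both arguments identify the $\pi/2$-rotation as the map $v\mapsto \langle v,\hat n\rangle\,\hat n + \hat n\times v$ (your $P+N$ is precisely this in matrix form) by splitting $v$ into its components along and perpendicular to $\hat n$, then read off the matrix in the standard basis. The only cosmetic difference is that the paper introduces an auxiliary right-handed basis $\{\hat l,\hat m,\hat n\}$ to verify the $\pi/2$ action on the perpendicular part, whereas you invoke the right-hand rule directly and confirm orientation via the special case $\hat n=(0,0,1)$.
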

\begin{proof}
Choose two vectors, $\mathbf{\hat{l}}$ and $\mathbf{\hat{m}}$, such that $\{\mathbf{\hat{l}},\mathbf{\hat{m}},\mathbf{\hat{n}}\}$ is a right-handed orthonormal basis. Let $u=a\mathbf{\hat{l}}+b\mathbf{\hat{m}}+c\mathbf{\hat{n}}$, with $a,b,c\in\mathbb{R}$, be any vector to be rotated by an angle $\frac{\pi}{2}$ counterclockwise about the axis $\mathbf{\hat{n}}$. The resulting vector $u'$ is the vector $u$ with its component in the $\mathbf{\hat{l}},\mathbf{\hat{m}}$ plane rotated by $\frac{\pi}{2}$
\begin{eqnarray*}
u'&=& -b\mathbf{\hat{l}}+a\mathbf{\hat{m}}+c\mathbf{\hat{n}}\\
  &=& \mathbf{\hat{n}}\times u + \langle u,\mathbf{\hat{n}} \rangle \mathbf{\hat{n}}.
\end{eqnarray*}
Consider the standard basis $\{{\mathbf{\hat{e_1}},\mathbf{\hat{e_2}},\mathbf{\hat{e_3}}}\}$ of $\mathbb{R}^3$. If $u$ is written as
\begin{equation*}
u=u_1\mathbf{\hat{e_1}} + u_2\mathbf{\hat{e_2}} + u_3\mathbf{\hat{e_3}},
\end{equation*}
then
\begin{eqnarray*}
u'&=& \mathbf{\hat{n}}\times u + \langle u,\mathbf{\hat{n}} \rangle \mathbf{\hat{n}}\\
  &=& (n_2u_3-n_3u_2 + u_1n_1^2+u_2n_1n_2+u_3n_1n_3)\mathbf{\hat{e_1}}\\
  &+&(n_3u_1-n_1u_3+u_1n_1n_2+u_2n_2^2+u_3n_2n_3)\mathbf{\hat{e_2}}\\
  &+&(n_1u_2-n_2u_1 + u_1n_1n_3+u_2n_2n_3+u_3n_3^2)\mathbf{\hat{e_3}}.
\end{eqnarray*}

Therefore, the matrix representation of this rotation  is 
\[
A\Big(\mathbf{\hat{n}},\frac{\pi}{2}\Big)=\left[\begin{matrix}
n_1^2 & n_1n_2-n_3 & n_1n_3+n_2 \\ n_1n_2+n_3 & n_2^2 & n_2n_3-n_1 \\ n_1n_3-n_2 & n_2n_3+n_1 & n_3^2
\end{matrix}\right].  \square
\]
\end{proof}

\begin{remark}
Thanks to the characterization in \eqref{e-Rot} of the matrix which represents the rotation by an angle $\frac{\pi}{2}$ about the axis $\mathbf{\hat{n}}=(n_1,n_2,n_3)$ we can obtain a matrix characterization of the linear semigroup generated by $A\Big(\mathbf{\hat{n}},\frac{\pi}{2}\Big)$, namely the uniformly continuous semigroup of bounded linear operators generated by $A\Big(\mathbf{\hat{n}},\frac{\pi}{2}\Big)$, denoted by $T(\cdot)$,
has the following explicit representation
\begin{eqnarray*}
&&T(t)=e^{tA(\mathbf{\hat{n}},\frac{\pi}{2})}=\sum_{n=0}^{\infty}\frac{(tA(\mathbf{\hat{n}},\frac{\pi}{2}))^n}{n!}=\\
&&\left[\begin{matrix}
n_1^2(e^t-\cos t)+\cos t & n_1n_2(e^t-\cos t)-n_3\sin t & n_1n_3(e^t-\cos t)+n_2\sin t 
\\ n_1n_2(e^t-\cos t)+n_3\sin t & n_2^2(e^t-\cos t)+\cos t & n_2n_3(e^t-\cos t)-n_1\sin t 
\\ n_1n_3(e^t-\cos t)-n_2\sin t & n_2n_3(e^t-\cos t)+n_1\sin t & n_3^2(e^t-\cos t)+\cos t
\end{matrix}\right]
\end{eqnarray*}
for any $t\geqslant0$.
\end{remark}

\begin{remark}
An explicit formula for the matrix elements of a general $3\times3$ rotation matrix can be find in Rodrigues \cite{R}; namely, if $R(\mathbf{\hat{n}},\theta)$ denotes the a rotation by an angle $\theta$ about an axis $\mathbf{\hat{n}}=(n_1,n_2,n_3)$ $(n_1^2+n_2^2+n_3^2=1)$, whose elements are denoted by $R_{ij}(\mathbf{\hat{n}},\theta)$, then we have the Rodrigues formula 
\begin{equation}\label{e-FR}
R_{ij}(\mathbf{\hat{n}},\theta)=\cos(\theta)\delta_{ij}+(1-\cos(\theta))n_in_j-\sin(\theta)\epsilon_{ijk}n_k,
\end{equation}
where $\delta_{ij}$ denotes the Kronecker delta, i.e., 
\[
\delta_{ij}= 
\begin{cases}
1, &\mbox{if}\ i=j,\\
0, &\mbox{if}\ i\neq j, 
\end{cases}
\]  
and $\epsilon_{ijk}$ denotes the Levi-Civita tensor, i.e., 
\[
\epsilon_{ijk}= 
\begin{cases}
1, &\mbox{if}\ (i,j,k)\in\{(1,2,3),(2,3,1),(3,1,2)\},\\
-1, &\mbox{if}\ (i,j,k)\in\{(3,2,1),(1,3,2),(2,1,3)\},\\
0, &\mbox{if}\ i=j,\ \mbox{or}\ j=k,\ \mbox{or}\ k=i, 
\end{cases}
\]
which is called the angle-and-axis parameterization of the three-dimensional rotation matrix.
\end{remark}

We wish to derive all the rotations by any angle $\theta\in\mathbb{R}$ through the rotation by $\frac{\pi}{2}$ and its fractional powers. In order to get this result we first explicit, in the following theorem, the fractional power, for $0\leq\alpha\leq 1$, of the rotation $A(\mathbf{\hat{n}},\frac{\pi}{2})$ in Lemma \ref{lemmapi2}. It is one of the main results of this work.

\begin{theorem}\label{teofrac}
Let $A(\mathbf{\hat{n}},\frac{\pi}{2})$ be the matrix that represents the rotation by an angle $\frac{\pi}{2}$ about the axis $\mathbf{\hat{n}}=(n_1,n_2,n_3)$. 
For $0\leqslant \alpha \leqslant 1$, the fractional power of the rotation $A(\mathbf{\hat{n}},\frac{\pi}{2})$ is given by
\[
\begin{split}
&A^{\alpha}\Big(\mathbf{\hat{n}},\frac{\pi}{2}\Big)=\\
&\left[\begin{matrix}
n_1^2(1-\cos(\frac{\alpha\pi}{2}))+\cos(\frac{\alpha\pi}{2}) & n_1n_2(1-\cos(\frac{\alpha\pi}{2}))-n_3\sin(\frac{\alpha\pi}{2}) & n_1n_3(1-\cos(\frac{\alpha\pi}{2})))+n_2\sin(\frac{\alpha\pi}{2})\\n_1n_2(1-\cos(\frac{\alpha\pi}{2}))+n_3\sin(\frac{\alpha\pi}{2}) & n_2^2(1-cos\frac{\alpha\pi}{2})+cos\frac{\alpha\pi}{2} & n_2n_3(1-cos\frac{\alpha\pi}{2})-n_1\sin(\frac{\alpha\pi}{2})\\ n_1n_3(1-cos\frac{\alpha\pi}{2})-n_2\sin(\frac{\alpha\pi}{2}) & n_2n_3(1-\cos(\frac{\alpha\pi}{2}))+n_1\sin(\frac{\alpha\pi}{2}) & n_3^2(1-cos\frac{\alpha\pi}{2})+\cos(\frac{\alpha\pi}{2})
\end{matrix}\right].
\end{split}
\]
\end{theorem}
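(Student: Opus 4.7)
The plan is to compute $A^{\alpha}(\mathbf{\hat{n}},\pi/2)$ from its spectral data and then identify the result with the Rodrigues expression \eqref{e-FR} evaluated at angle $\alpha\pi/2$. Set $A:=A(\mathbf{\hat{n}},\pi/2)$. As in the proof of Lemma \ref{lemmapi2}, pick $\mathbf{\hat{l}}$ and $\mathbf{\hat{m}}$ so that $\{\mathbf{\hat{l}},\mathbf{\hat{m}},\mathbf{\hat{n}}\}$ is a right-handed orthonormal basis; then $A$ fixes $\mathbf{\hat{n}}$ and rotates the $\mathbf{\hat{l}}\mathbf{\hat{m}}$-plane by $\pi/2$, so its spectrum is $\{1,i,-i\}$ with eigenvectors $\mathbf{\hat{n}}$, $\mathbf{\hat{l}}-i\mathbf{\hat{m}}$ and $\mathbf{\hat{l}}+i\mathbf{\hat{m}}$ respectively. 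None of these eigenvalues lies on $(-\infty,0)$, so the fractional power $A^{\alpha}=e^{\alpha\log A}$ is defined, and the principal logarithm gives $\log 1=0$ and $\log(\pm i)=\pm i\pi/2$.

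The first step is to evaluate $A^{\alpha}$ on the eigenbasis: applying $\mu\mapsto e^{\alpha\log\mu}$ spectrally sends $1\mapsto 1$ and $\pm i\mapsto e^{\pm i\alpha\pi/2}$, while the eigenvectors are unchanged. As an independent check for $0<\alpha<1$, one can substitute the spectral decomposition into the Balakrishnan integral \eqref{fracpos} and evaluate each diagonal entry using the classical formula
\[
\int_0^\infty \lambda^{\alpha-1}(\lambda+\mu)^{-1}\,d\lambda = \frac{\pi\,\mu^{\alpha-1}}{\sin(\alpha\pi)},\qquad \mu\in\mathbb{C}\setminus(-\infty,0],
\]
which yields the same three eigenvalues $1$ and $e^{\pm i\alpha\pi/2}$ for $A^{\alpha}$.

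Either way, $A^{\alpha}$ fixes $\mathbf{\hat{n}}$ and acts on the $\mathbf{\hat{l}}\mathbf{\hat{m}}$-plane as the rotation by angle $\alpha\pi/2$; in other words, $A^{\alpha}(\mathbf{\hat{n}},\pi/2)=R(\mathbf{\hat{n}},\alpha\pi/2)$. To finish, I would substitute $\theta=\alpha\pi/2$ into the Rodrigues formula \eqref{e-FR},
\[
R_{ij}(\mathbf{\hat{n}},\alpha\pi/2)=\cos\tfrac{\alpha\pi}{2}\,\delta_{ij}+\bigl(1-\cos\tfrac{\alpha\pi}{2}\bigr)n_in_j-\sin\tfrac{\alpha\pi}{2}\,\epsilon_{ijk}n_k,
\]
and write out the nine entries; this reproduces the matrix displayed in the theorem, and the boundary cases $\alpha=0,1$ recover $I$ and $A$ consistently. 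The main obstacle is the first step — recognizing that taking the $\alpha$th fractional power of a rotation about $\mathbf{\hat{n}}$ by $\pi/2$ merely rescales the angle by $\alpha$; once that is in hand, the remainder is routine bookkeeping using the closed-form expressions already present in the excerpt.
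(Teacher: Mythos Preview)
Your argument is correct, but it follows a genuinely different route from the paper's. The paper works entirely in coordinates with the Balakrishnan integral \eqref{fracpos}: it writes out the $3\times 3$ matrix $(\lambda I+A)^{-1}$ explicitly, multiplies by $A$, performs the partial-fraction decompositions
\[
\frac{\lambda(\lambda-1)}{(\lambda+1)(\lambda^2+1)}=\frac{1}{\lambda+1}-\frac{1}{\lambda^2+1},\qquad
\frac{1}{\lambda^2+1},\qquad
\frac{\lambda}{\lambda^2+1},
\]
and then integrates term by term to obtain each entry of $A^{\alpha}$ directly. Your approach instead diagonalises $A$ over $\mathbb{C}$, applies the functional calculus eigenvalue by eigenvalue (so that $\{1,\pm i\}\mapsto\{1,e^{\pm i\alpha\pi/2}\}$), recognises the result geometrically as the rotation by $\alpha\pi/2$ about $\mathbf{\hat{n}}$, and only then reads off the matrix from the Rodrigues formula \eqref{e-FR}, which the paper has already recorded as a classical fact. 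The paper's computation is more self-contained and makes the role of the Balakrishnan representation explicit, which is the advertised point of the article; your spectral argument is shorter and explains \emph{why} the fractional power remains a rotation about the same axis, but it leans on \eqref{e-FR} rather than rederiving it.
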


\begin{proof}
The proof consists of the explicit calculation of the fractional power of the operator $A(\mathbf{\hat{n}},\frac{\pi}{2})$ through the formula \eqref{fracpos} for $0<\alpha<1$.
\begin{equation}\label{potR}
A\Big(\mathbf{\hat{n}},\frac{\pi}{2}\Big)^{\alpha}=\frac{\sin(\alpha\pi)}{\pi}\int_{0}^{\infty}{\lambda}^{\alpha-1}A\Big(\mathbf{\hat{n}},\frac{\pi}{2}\Big)\Big(\lambda I+A\Big(\mathbf{\hat{n}},\frac{\pi}{2}\Big)\Big)^{-1}d\lambda,\ 0<\alpha<1.
\end{equation} 

Note that
\begin{eqnarray*}
&&\Big(\lambda I+A\Big(\mathbf{\hat{n}},\frac{\pi}{2}\Big)\Big)^{-1}=\\
&&\frac{1}{(\lambda+1)(\lambda^2+1)}\left[\begin{matrix}
a^2(1-\lambda)+\lambda(1+\lambda) & ab(1-\lambda)+c(1+\lambda)        & ac(1-\lambda)-b(1+\lambda)\\
ab(1-\lambda)-c(1+\lambda)        & b^2(1-\lambda)+\lambda(1+\lambda) & bc(1-\lambda)+a(1+\lambda)\\
ac(1-\lambda)+b(1+\lambda)        & bc(1-\lambda)-a(1+\lambda)        & c^2(1-\lambda)+\lambda(1+\lambda)
\end{matrix}\right]
\end{eqnarray*}
and
\begin{eqnarray*}
&&A\Big(\mathbf{\hat{n}},\frac{\pi}{2}\Big)\Big(\lambda I+A\Big(\mathbf{\hat{n}},\frac{\pi}{2}\Big)\Big)^{-1}=\\
&&\frac{1}{(\lambda+1)(\lambda^2+1)}\left[\begin{matrix}
a^2\lambda(\lambda-1)+1+\lambda            & ab\lambda(\lambda-1)-c\lambda(1+\lambda)    & ac\lambda(\lambda-1)+b\lambda(1+\lambda)\\
ab\lambda(\lambda-1)+c\lambda(1+\lambda)   & b^2\lambda(\lambda-1)+1+\lambda             & bc\lambda(\lambda-1)-a\lambda(1+\lambda)\\
ac\lambda(\lambda-1)-b\lambda(1+\lambda)   & bc\lambda(\lambda-1)+a\lambda(1+\lambda)    & c^2\lambda(\lambda-1)+1+\lambda
\end{matrix}\right].
\end{eqnarray*}

Since
\begin{eqnarray*}
\frac{\lambda(\lambda-1)}{(\lambda+1)(\lambda^2+1)}&=&\frac{1}{\lambda+1}-\frac{1}{\lambda^2+1}\\
\frac{\lambda+1}{(\lambda+1)(\lambda^2+1)}&=&\frac{1}{\lambda^2+1}\\
\frac{\lambda(\lambda+1)}{(\lambda+1)(\lambda^2+1)}&=&\frac{\lambda}{\lambda^2+1}\\
\end{eqnarray*}
from right side of the equation \eqref{potR} and $A\Big(\mathbf{\hat{n}},\frac{\pi}{2}\Big)\Big(\lambda I+A\Big(\mathbf{\hat{n}},\frac{\pi}{2}\Big)\Big)^{-1}$, and by  \eqref{fracpos}   we obtain
\[
\begin{split}
&A^{\alpha}\Big(\mathbf{\hat{n}},\frac{\pi}{2}\Big)=\\
&\left[\begin{matrix}
n_1^2(1-\cos(\frac{\alpha\pi}{2}))+\cos(\frac{\alpha\pi}{2}) & n_1n_2(1-\cos(\frac{\alpha\pi}{2}))-n_3\sin(\frac{\alpha\pi}{2}) & n_1n_3(1-\cos(\frac{\alpha\pi}{2})))+n_2\sin(\frac{\alpha\pi}{2})\\n_1n_2(1-\cos(\frac{\alpha\pi}{2}))+n_3\sin(\frac{\alpha\pi}{2}) & n_2^2(1-cos\frac{\alpha\pi}{2})+cos\frac{\alpha\pi}{2} & n_2n_3(1-cos\frac{\alpha\pi}{2})-n_1\sin(\frac{\alpha\pi}{2})\\ n_1n_3(1-cos\frac{\alpha\pi}{2})-n_2\sin(\frac{\alpha\pi}{2}) & n_2n_3(1-\cos(\frac{\alpha\pi}{2}))+n_1\sin(\frac{\alpha\pi}{2}) & n_3^2(1-cos\frac{\alpha\pi}{2})+\cos(\frac{\alpha\pi}{2})
\end{matrix}\right].
\end{split}
\]
Finally, cases $\alpha=0$ and $\alpha=1$ are immediate, and the proof is complete.   $\square$
\end{proof}

\begin{corollary}
The fractional power $A^{\alpha}(\mathbf{\hat{n}},\frac{\pi}{2})$ coincides with matrix $R(\mathbf{\hat{n}},\frac{\alpha\pi}{2})=[R_{ij}(\mathbf{\hat{n}},\frac{\alpha\pi}{2})]$, where $R_{ij}(\mathbf{\hat{n}},\frac{\alpha\pi}{2})$ is given by \eqref{e-FR}, for $0\leqslant \alpha \leqslant 1$. 
\end{corollary}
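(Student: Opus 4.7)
The plan is to verify the identity entry-by-entry, using the explicit form of $A^{\alpha}(\mathbf{\hat{n}},\frac{\pi}{2})$ delivered by Theorem~\ref{teofrac} and the Rodrigues formula \eqref{e-FR} evaluated at the angle $\theta=\frac{\alpha\pi}{2}$. Since both matrices are written out componentwise and depend on the same trigonometric building blocks $\cos(\frac{\alpha\pi}{2})$, $\sin(\frac{\alpha\pi}{2})$, and the products $n_in_j$, the comparison is essentially algebraic bookkeeping. I would first dispose of the boundary cases: at $\alpha=0$ the right-hand side of \eqref{e-FR} collapses to $\delta_{ij}$ (the identity), matching $A^0=I$; at $\alpha=1$ it reduces to the entries of $A(\mathbf{\hat{n}},\frac{\pi}{2})$ from Lemma~\ref{lemmapi2}. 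It then suffices to treat $0<\alpha<1$.

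For $0<\alpha<1$, I would split into the diagonal and off-diagonal entries. For the diagonal case $i=j$, the Levi--Civita term vanishes (since $\epsilon_{iik}=0$), so \eqref{e-FR} yields
\[
R_{ii}(\mathbf{\hat{n}},\tfrac{\alpha\pi}{2})=\cos(\tfrac{\alpha\pi}{2})+\bigl(1-\cos(\tfrac{\alpha\pi}{2})\bigr)n_i^2=n_i^2\bigl(1-\cos(\tfrac{\alpha\pi}{2})\bigr)+\cos(\tfrac{\alpha\pi}{2}),
\]
which is exactly the $(i,i)$-entry of the matrix in Theorem~\ref{teofrac}. For off-diagonal entries $i\neq j$, one has $\delta_{ij}=0$ and the unique $k\notin\{i,j\}$ contributes $\epsilon_{ijk}n_k=\pm n_k$ with the sign determined by the cyclic order. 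A direct check on the six ordered pairs $(1,2),(2,3),(3,1)$ (sign $+1$) and $(2,1),(3,2),(1,3)$ (sign $-1$) reproduces the six off-diagonal entries of the matrix in Theorem~\ref{teofrac}, where the sign of the $n_k\sin(\frac{\alpha\pi}{2})$ term flips accordingly.

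The only obstacle is purely notational: keeping track of the signs of $\epsilon_{ijk}$ and matching them to the pattern of $\pm n_k\sin(\frac{\alpha\pi}{2})$ in the theorem. No further computation is needed, and the identification $A^{\alpha}(\mathbf{\hat{n}},\frac{\pi}{2})=R(\mathbf{\hat{n}},\frac{\alpha\pi}{2})$ follows for all $\alpha\in[0,1]$.
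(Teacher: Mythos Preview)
Your proposal is correct and is essentially the approach the paper intends: the corollary is stated without proof because it is an immediate entry-by-entry comparison of the explicit matrix in Theorem~\ref{teofrac} with the Rodrigues formula \eqref{e-FR} at $\theta=\frac{\alpha\pi}{2}$, which is precisely what you carry out.
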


We are now in a position to give our definition for the rotation matrix by an angle $\theta$ through fractional powers of the rotation by $\frac{\pi}{2}$.

\begin{definition}
The rotation by $\theta\in\mathbb{R}$, denoted by $A(\mathbf{\hat{n}},\theta)$, is defined to be
\begin{equation}\label{eqrot}
A(\mathbf{\hat{n}},\theta):=A^{\frac{2\theta}{\pi}}\Big(\mathbf{\hat{n}},\frac{\pi}{2}\Big).
\end{equation}
\end{definition}

Note that $A(\mathbf{\hat{n}},\frac{\pi}{2})$ is such that the fractional power $A^{\alpha}(\mathbf{\hat{n}},\frac{\pi}{2})$ is well-defined for $\alpha\in \mathbb{R}$. Theorem \ref{teofrac} states that the definition in \eqref{eqrot} agrees with the classical one given by \textit{Rodrigues formula} in \eqref{e-FR} for $0\leq\theta\leq \frac{\pi}{2}$. The following theorem extends this result for $\theta\in \mathbb{R}$. 

\begin{theorem}
Let $A(\mathbf{\hat{n}},\theta)$ be the rotation defined in \eqref{eqrot}. Then 
\begin{equation}\label{eqigrod}
A(\mathbf{\hat{n}},\theta)=R(\mathbf{\hat{n}},\theta)
\end{equation}
for any $\theta\in\mathbb{R}$.   
\end{theorem}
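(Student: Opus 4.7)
The plan is to reduce the case of arbitrary $\theta\in\mathbb{R}$ to the already-settled range $\theta\in[0,\pi/2]$ covered by the preceding corollary. Abbreviate $A:=A(\mathbf{\hat{n}},\tfrac{\pi}{2})$ and note that, as a rotation by $\pi/2$ in $\mathbb{R}^{3}$, its spectrum is $\{1,i,-i\}$, which misses $(-\infty,0)$, so the principal logarithm $\log A$ and the fractional powers $A^{\alpha}=e^{\alpha\log A}$ are well-defined for every $\alpha\in\mathbb{R}$.

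The first step is to record the semigroup identity
\begin{equation*}
A^{\alpha+\beta}=e^{(\alpha+\beta)\log A}=e^{\alpha\log A}\,e^{\beta\log A}=A^{\alpha}A^{\beta},\qquad \alpha,\beta\in\mathbb{R},
\end{equation*}
which is immediate because $\log A$ commutes with itself. In particular, for $k\in\mathbb{Z}$ the fractional power $A^{k}$ coincides with the ordinary $k$-th matrix power of $A(\mathbf{\hat{n}},\tfrac{\pi}{2})$; geometrically, this is the composition of $k$ quarter-turns about $\mathbf{\hat{n}}$, and so $A^{k}=R(\mathbf{\hat{n}},\tfrac{k\pi}{2})$ by the very meaning of rotation.

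Given $\theta\in\mathbb{R}$, I would write $\tfrac{2\theta}{\pi}=k+s$ with $k\in\mathbb{Z}$ and $s\in[0,1)$, i.e.\ $\theta=\tfrac{k\pi}{2}+\tfrac{s\pi}{2}$. The semigroup identity then gives
\begin{equation*}
A(\mathbf{\hat{n}},\theta)=A^{\frac{2\theta}{\pi}}=A^{k}\cdot A^{s}=R\Big(\mathbf{\hat{n}},\tfrac{k\pi}{2}\Big)\cdot A^{s}\Big(\mathbf{\hat{n}},\tfrac{\pi}{2}\Big),
\end{equation*}
and since $s\in[0,1)$ the preceding corollary identifies the last factor as $R(\mathbf{\hat{n}},\tfrac{s\pi}{2})$. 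The composition law $R(\mathbf{\hat{n}},\theta_{1})R(\mathbf{\hat{n}},\theta_{2})=R(\mathbf{\hat{n}},\theta_{1}+\theta_{2})$ for rotations about a common axis, verifiable entrywise from the Rodrigues formula \eqref{e-FR} via the addition theorems for sine and cosine, then collapses the product to $R(\mathbf{\hat{n}},\tfrac{k\pi}{2}+\tfrac{s\pi}{2})=R(\mathbf{\hat{n}},\theta)$, as required.

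The subtle point to watch out for is that the power-law $(A^{\alpha})^{\beta}=A^{\alpha\beta}$ of Proposition 2.2 is only stated for $\alpha\in(0,1)$, and a naive application outside this range would run into branch-cut issues (the eigenvalues $e^{\pm i\alpha\pi/2}$ of $A^{\alpha}$ cross $(-\infty,0)$ for $|\alpha|>2$, so $\log A^{\alpha}\neq \alpha\log A$ in general). My proof sidesteps this entirely by never invoking that identity: the only structural property used is the semigroup law $A^{\alpha+\beta}=A^{\alpha}A^{\beta}$, which for real exponents follows trivially from the exponential definition. An alternative route would be to diagonalize $A(\mathbf{\hat{n}},\tfrac{\pi}{2})$ over $\mathbb{C}$ using the eigenvalues $1,i,-i$ and verify the group law on the spectrum, but this feels less in keeping with the paper's operator-theoretic viewpoint.
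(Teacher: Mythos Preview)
Your argument is correct and rests on the same two ingredients the paper uses: the semigroup law $A^{\alpha+\beta}=A^{\alpha}A^{\beta}$ for the fractional powers, and the addition law $R(\mathbf{\hat{n}},\theta_{1})R(\mathbf{\hat{n}},\theta_{2})=R(\mathbf{\hat{n}},\theta_{1}+\theta_{2})$ for Rodrigues rotations about a common axis. The paper implements the reduction to $[0,\tfrac{\pi}{2}]$ by induction on $n$ over the intervals $\big[\tfrac{(n-1)\pi}{2},\tfrac{n\pi}{2}\big]$, peeling off one factor of $A(\mathbf{\hat{n}},\tfrac{\pi}{2})$ at a time, and then treats $\theta<0$ separately by first recomputing $A^{-\alpha}(\mathbf{\hat{n}},\tfrac{\pi}{2})$ for $\alpha\in[0,1]$ in the style of Theorem~\ref{teofrac} and running an analogous induction downward. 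Your floor decomposition $\tfrac{2\theta}{\pi}=k+s$ with $k\in\mathbb{Z}$, $s\in[0,1)$ handles all real $\theta$ in one stroke and avoids that second computation entirely, which is a genuine streamlining of the same idea. Your closing remark about the branch-cut failure of $(A^{\alpha})^{\beta}=A^{\alpha\beta}$ once $|\alpha|>2$ is a subtlety the paper does not flag, and it is a good observation that the proof only ever needs the additive law $A^{\alpha+\beta}=A^{\alpha}A^{\beta}$, which is unproblematic.
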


\begin{proof} Firstly for $\theta\geqslant0$, it is sufficient to show that \eqref{eqigrod} is satisfied for 
\[
\frac{(n-1)\pi}{2}\leq \theta \leq \frac{n\pi}{2},
\]
for $n\in\mathbb{N}$. We proceed by induction. The case $n=1$ follows from Theorem \ref{teofrac}. If we assume \eqref{eqigrod} for $n$, we can prove the result for $n+1$. Set 
$$
\frac{n\pi}{2}\leq \theta \leq \frac{(n+1)\pi}{2}
$$
so that
\begin{equation*}
\frac{(n-1)\pi}{2}\leq \theta-\frac{\pi}{2} \leq \frac{n\pi}{2}
\end{equation*}

Hence
\begin{equation}\label{AA}
A(\mathbf{\hat{n}},\theta)=A^{\frac{2\theta}{\pi}}\Big(\mathbf{\hat{n}},\frac{\pi}{2}\Big)=A^{\frac{2\theta}{\pi}-1}\Big(\mathbf{\hat{n}},\frac{\pi}{2}\Big)A\Big(\mathbf{\hat{n}},\frac{\pi}{2}\Big)=A\Big(\mathbf{\hat{n}},\theta-\frac{\pi}{2})A\Big(\mathbf{\hat{n}},\frac{\pi}{2}\Big)
\end{equation}
and by induction hypothesis 
\begin{equation}\label{indhyp}
A\Big(\mathbf{\hat{n}},\theta-\frac{\pi}{2}\Big)=R\Big(\mathbf{\hat{n}},\theta-\frac{\pi}{2}\Big)
\end{equation}
combining \eqref{AA} with \eqref{indhyp} we obtain
\begin{eqnarray*}\label{ARA}
A(\mathbf{\hat{n}},\theta)&=&R\Big(\mathbf{\hat{n}},\theta-\frac{\pi}{2}\Big)A\Big(\mathbf{\hat{n}},\frac{\pi}{2}\Big)\\
                          &=&R(\mathbf{\hat{n}},\theta)R\Big(\mathbf{\hat{n}},-\frac{\pi}{2}\Big)R\Big(\mathbf{\hat{n}},\frac{\pi}{2}\Big)\\
                          &=&R(\mathbf{\hat{n}},\theta)
\end{eqnarray*}
above we use some basic properties of the Euler-Rodrigues formula.  

Secondly, for $-\frac{\pi}{2}\leqslant\theta\leqslant0$, and proceeding analogously to the proof of Theorem \eqref{teofrac} we can obtain the expression
\[
\begin{split}
&A^{-\alpha}\Big(\mathbf{\hat{n}},\frac{\pi}{2}\Big)=\\
&\left[\begin{matrix}
n_1^2(1-\cos(\frac{\alpha\pi}{2}))+\cos(\frac{\alpha\pi}{2}) & n_1n_2(1-\cos(\frac{\alpha\pi}{2}))+n_3\sin(\frac{\alpha\pi}{2}) & n_1n_3(1-\cos(\frac{\alpha\pi}{2})))-n_2\sin(\frac{\alpha\pi}{2})\\n_1n_2(1-\cos(\frac{\alpha\pi}{2}))-n_3\sin(\frac{\alpha\pi}{2}) & n_2^2(1-\cos(\frac{\alpha\pi}{2}))+\cos(\frac{\alpha\pi}{2}) & n_2n_3(1-\cos(\frac{\alpha\pi}{2}))+n_1\sin(\frac{\alpha\pi}{2})\\ n_1n_3(1-\cos(\frac{\alpha\pi}{2}))+n_2\sin(\frac{\alpha\pi}{2}) & n_2n_3(1-\cos(\frac{\alpha\pi}{2}))-n_1\sin(\frac{\alpha\pi}{2}) & n_3^2(1-\cos(\frac{\alpha\pi}{2}))+\cos(\frac{\alpha\pi}{2})
\end{matrix}\right]
\end{split}
\]
and so the definition in \eqref{eqrot} agrees with the classical one given by the Euler-Rodrigues formula in \eqref{e-FR} for $-\frac{\pi}{2}\leqslant\theta\leqslant0$. Finally, an analogous argument of induction as in the first part of this proof shows that \eqref{eqrot} agrees with the  Euler-Rodrigues formula in \eqref{e-FR} for $\theta\leqslant0$. $\square$
\end{proof}

\begin{corollary} 
The family $\{A(\mathbf{\hat{n}},\theta);\theta\in\mathbb{R}\}$, where
\begin{eqnarray*}
&&A(\mathbf{\hat{n}},\theta)=\\
&&\left[\begin{matrix}
n_1^2(1-\cos(\theta))+\cos(\theta) & n_1n_2(1-\cos(\theta))-n_3\sin(\theta) & n_1n_3(1-\cos(\theta))+n_2\sin(\theta)\\n_1n_2(1-\cos(\theta))+n_3\sin(\theta) & n_2^2(1-\cos(\theta))+\cos(\theta) & n_2n_3(1-\cos(\theta))-n_1\sin(\theta)\\ n_1n_3(1-\cos(\theta))-n_2\sin(\theta) & n_2n_3(1-\cos(\theta))+n_1\sin(\theta) & n_3^2(1-\cos(\theta))+\cos(\theta)
\end{matrix}\right]
\end{eqnarray*}
is a uniformly continuous group on $\mathbb{R}^3$ with infinitesimal generator $G:\mathbb{R}^3\to\mathbb{R}^3$ given by
\[
G=\left[\begin{matrix}
0 & -n_3 & n_2 \\ n_3 & 0 & -n_1 \\ -n_2 & n_1 & 0
\end{matrix}\right].  
\]
\end{corollary}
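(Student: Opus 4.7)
The plan is to split the verification into three pieces: the explicit matrix form, the group axioms together with uniform continuity, and the computation of the infinitesimal generator at $\theta=0$.

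First, the explicit matrix form is essentially free: the previous theorem establishes that $A(\mathbf{\hat{n}},\theta)=R(\mathbf{\hat{n}},\theta)$ for every $\theta\in\mathbb{R}$, so one only needs to substitute the Rodrigues entries from \eqref{e-FR} into a $3\times 3$ array, carefully listing the off-diagonal signs according to the Levi-Civita symbol. This step is routine bookkeeping and should be dispatched in a single sentence referencing \eqref{e-FR}.

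Next, for the group property, I would verify the three axioms separately. The identity $A(\mathbf{\hat{n}},0)=I$ is immediate from either the matrix formula (since $\cos 0=1$, $\sin 0=0$) or from the defining relation \eqref{eqrot} since $A^{0}(\mathbf{\hat{n}},\tfrac{\pi}{2})=I$. The homomorphism property $A(\mathbf{\hat{n}},\theta_1+\theta_2)=A(\mathbf{\hat{n}},\theta_1)A(\mathbf{\hat{n}},\theta_2)$ follows from \eqref{eqrot} combined with the fractional-power identity $B^{\alpha}B^{\beta}=B^{\alpha+\beta}$, which is applicable because $A(\mathbf{\hat{n}},\tfrac{\pi}{2})$ has eigenvalues $\{1,i,-i\}$ and hence no spectrum on $(-\infty,0)$; alternatively, one can quote the classical fact that rotations about a common axis commute additively in the angle, which is already granted by the preceding theorem. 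Uniform continuity on $\mathbb{R}^{3}$ reduces to continuity of $\theta\mapsto A(\mathbf{\hat{n}},\theta)$ in operator norm, and since each matrix entry is a trigonometric polynomial in $\theta$, the map is even real-analytic; in finite dimensions, continuity of a one-parameter group at $\theta=0$ is equivalent to uniform continuity, so this step is immediate.

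Finally, for the generator I would compute
\[
G=\lim_{\theta\to 0}\frac{A(\mathbf{\hat{n}},\theta)-I}{\theta}
\]
entrywise from the explicit matrix. Using $1-\cos\theta=O(\theta^{2})$ and $\sin\theta=\theta+O(\theta^{3})$, the terms involving $n_{i}n_{j}(1-\cos\theta)$ vanish in the limit and only the $\pm n_{k}\sin\theta$ contributions survive, yielding precisely the skew-symmetric matrix $G$ displayed in the statement. I expect no genuine obstacle here: the only point needing even a sentence of care is justifying the exponent addition $B^{\alpha+\beta}=B^{\alpha}B^{\beta}$ for the fractional powers of the rotation, which is handled either by Proposition 2.2$(ii)$ combined with the spectral location of $A(\mathbf{\hat{n}},\tfrac{\pi}{2})$, or by invoking the already-proved equality $A(\mathbf{\hat{n}},\theta)=R(\mathbf{\hat{n}},\theta)$ and using the well-known additivity of the classical Rodrigues rotations.
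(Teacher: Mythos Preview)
Your proposal is correct and follows essentially the same route as the paper: the group law comes from the fractional-power definition \eqref{eqrot}, and the generator is read off from $\lim_{\theta\to 0}\theta^{-1}(A(\mathbf{\hat{n}},\theta)-I)$. The only cosmetic differences are that the paper deduces uniform continuity \emph{a posteriori} from boundedness of $G$ rather than from the trigonometric entries, and note that Proposition~2.2$(ii)$ records $(A^{\alpha})^{\beta}=A^{\alpha\beta}$ rather than the additive law $A^{\alpha}A^{\beta}=A^{\alpha+\beta}$ you need---that identity instead follows directly from $A^{\alpha}=e^{\alpha\log A}$ (or from your alternative via $R(\mathbf{\hat{n}},\theta)$).
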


\begin{proof}
That family $\{A(\mathbf{\hat{n}},\theta);\theta\in\mathbb{R}\}$ is a group is an immediate consequence of the definition of $A(\mathbf{\hat{n}},\theta)$ in \eqref{eqrot}. We obtain $G$ easily from the definition of infinitesimal generator of a group 
\begin{equation*}
D(G)=\left\{u\in \mathbb{R}^3; \lim_{\theta\to 0}\frac{A(\mathbf{\hat{n}},\theta)u-u}{\theta}\ \text{exists}\right\}
\end{equation*}
and
\begin{equation*}
Gu=\lim_{\theta\to 0}\frac{A(\mathbf{\hat{n}},\theta)u-u}{\theta},\ \text{for any}\ u\in D(G).
\end{equation*}
Since $G$ is a bounded linear operator, we conclude that $\{A(\mathbf{\hat{n}},\theta);\theta\in\mathbb{R}\}$ is a uniformly continuous group on $\mathbb{R}^3$. $\square$
\end{proof}

\begin{remark}
In particular, we can obtain the explicit expression of the logarithm of rotations $A(\mathbf{\hat{n}},\theta)$ thanks to the fact that the logarithm is the infinitesimal generator of the uniformly continuous group $\{A^\alpha(\mathbf{\hat{n}},\theta);\alpha\in\mathbb{R}\}$ on $\mathbb{R}^3$; namely, we have
\[
\log A(\mathbf{\hat{n}},\theta)=\left[\begin{matrix}
0 & -\theta n_3 & \theta n_2 \\ \theta n_3 & 0 & -\theta n_1 \\ -\theta n_2 & \theta n_1 & 0
\end{matrix}\right].
\]
\end{remark}

\end{document}